\newtheorem{theorem}{Theorem}[section]
\newtheorem{lemma}[theorem]{Lemma}
\theoremstyle{definition}
\newtheorem{definition}[theorem]{Definition}
\theoremstyle{remark}
\numberwithin{equation}{section}
\newcommand{\D}{\mathbb{D}}
\newcommand{\C}{\mathbb{C}}
\newcommand{\ip}[2]{\langle #1, #2 \rangle}
\title{A refined Agler decomposition and geometric applications}
\author{Greg Knese}
\address{University of Alabama, Tuscaloosa, AL, 35487-0350}
\date{\today}
\email{geknese@bama.ua.edu}
\keywords{}
\thanks{This research was supported by NSF grant DMS-1001791}
\subjclass{Primary 47A57; Secondary 32D15}
\begin{document}
\bibliographystyle{apalike}
\maketitle

\begin{abstract} 
We prove a refined Agler decomposition for bounded analytic functions
on the bidisk and show how it can be used to reprove an interesting
result of Guo et al. related to extending holomorphic functions without
increasing their norm.  In addition, we give a new treatment of Heath
and Suffridge's characterization of holomorphic retracts on the
polydisk.
\end{abstract} 

\section{Introduction}
Let $\D$ denote the unit disk in $\C$ and $\D^2 = \D \times \D$ the
unit bidisk.

\cite{jA88} proved that a holomorphic function $f: \D^2 \to \D$ satisfies
a decomposition (later called an \emph{Agler decomposition}) of the
form
\[
1- f(z) \overline{f(\zeta)} = (1- z_1 \bar{\zeta}_1) K_1(z,\zeta) +
(1-z_2\bar{\zeta}_2) K_2(z,\zeta)
\]
where $K_1, K_2$ are positive semi-definite kernel functions.
A kernel function $K:\Omega \times \Omega \to \C$ is \emph{positive
  semi-definite} if for every finite subset $F \subset \Omega$ the
matrix
\[
(K(z,\zeta))_{z,\zeta \in F}
\]
is positive semi-definite. (In this article, $\Omega$ will be either
$\D^2$ or $\D$.)  The Agler decomposition generalizes the Pick
interpolation theorem from one-variable complex analysis, which
implies that for any $f: \D \to \D$, holomorphic,
\[
\frac{1-f(z)\overline{f(\zeta)}}{1-z\bar{\zeta}} 
\]
is a positive semi-definite kernel.

In recent years, more refined versions of the Agler decomposition have
been found for rational inner functions.  See \cite{CW99},
\cite{GW04}, \cite{gK08a}, or \cite{gK10aa}. (Unrelated to rational
inner functions, in specific, but still relevant are \cite{BSV05} and
\cite{LMP09}).  It has not been clear which of the ``refined'' aspects
of these decompositions for rational inner functions would extend to
more general bounded analytic functions (and which would actually be
useful).  The following theorem represents an offering in this
direction.  Our hope is that others may find it useful without having
to learn any of the underlying theory required to prove it.

\begin{theorem} \label{mainthm} 
Let $f: \D^2 \to \D$ be holomorphic. Then, there exist positive
semidefinite kernels $K_1, K_2$, and
holomorphic kernels $L_1, L_2$ such that
\[
1 - f(z)\overline{f(\zeta)} = (1-z_1\bar{\zeta}_1) K_1(z,\zeta) + (1-z_2 \bar{\zeta}_2) K_2 (z,\zeta)
\]
and
\[
f(z) - f(\zeta) = (z_1-\zeta_1)L_1(z,\zeta) + (z_2-\zeta_2)
L_2 (z,\zeta),
\]
where $(z,\zeta)=((z_1,z_2),(\zeta_1,\zeta_2))$.  In addition, the
following (pointwise) inequalities hold
\[
|L_j(z, \zeta)|^2 \leq
K_j(z,z)K_j(\zeta,\zeta) 
\]
for $j=1,2$.
\end{theorem}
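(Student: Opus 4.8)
My plan is to realize $f$ by a unitary colligation carrying an extra ``complex symmetric'' structure compatible with the two coordinates, to read all four kernels off of this colligation, and to obtain the pointwise inequalities from the Cauchy--Schwarz inequality.

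\emph{The realization and the two decompositions.} By Agler's realization theorem (equivalent, via a lurking--isometry argument, to the Agler decomposition recalled above) there are a Hilbert space $\mathcal{H}=\mathcal{H}_1\oplus\mathcal{H}_2$ with coordinate projections $P_1,P_2$ and a unitary $U=\left(\begin{smallmatrix}A&B\\ C&D\end{smallmatrix}\right)$ on $\C\oplus\mathcal{H}$ with $f(z)=A+BZ(z)(I-DZ(z))^{-1}C$, where $Z(z)=z_1P_1+z_2P_2$; I take $U$ minimal. Put $H(z)=(I-DZ(z))^{-1}C$ and $G(z)=B(I-Z(z)D)^{-1}$. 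Since $U$ is an isometry and $U(1,Z(z)H(z))=(f(z),H(z))$, polarizing $\|(1,Z(z)H(z))\|^2=\|(f(z),H(z))\|^2$ gives
\[
1-f(z)\overline{f(\zeta)}=\langle H(z),H(\zeta)\rangle-\langle Z(z)H(z),Z(\zeta)H(\zeta)\rangle=\sum_{j=1}^{2}(1-z_j\bar\zeta_j)K_j(z,\zeta),\qquad K_j(z,\zeta):=\langle P_jH(z),P_jH(\zeta)\rangle,
\]
and the $K_j$ are positive semidefinite as Gram kernels. The resolvent identity $(I-Z(z)D)[Z(z)H(z)-Z(\zeta)H(\zeta)]=(Z(z)-Z(\zeta))H(\zeta)$ (a one-line check from $(I-DZ)H=C$) gives
\[
f(z)-f(\zeta)=B[Z(z)H(z)-Z(\zeta)H(\zeta)]=G(z)(Z(z)-Z(\zeta))H(\zeta)=\sum_{j=1}^{2}(z_j-\zeta_j)L_j(z,\zeta),\qquad L_j(z,\zeta):=G(z)P_jH(\zeta),
\]
and each $L_j$ is holomorphic in $(z,\zeta)$.

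\emph{The inequality and the symmetry I want.} Writing $L_j(z,\zeta)=\langle P_jH(\zeta),P_jG(z)^*\rangle$, Cauchy--Schwarz bounds $|L_j(z,\zeta)|^2\le K_j(\zeta,\zeta)\,\|P_jG(z)^*\|^2$, so it suffices to arrange a colligation with $\|P_jG(z)^*\|=\|P_jH(z)\|$. I claim this holds whenever the minimal colligation can be chosen so that there is a conjugation $\mathcal{J}$ on $\C\oplus\mathcal{H}$, acting as conjugation on $\C$, commuting with $P_1$ and $P_2$, and satisfying $\mathcal{J}U\mathcal{J}=U^{*}$; equivalently $\mathcal{J}_{\mathcal{H}}:=\mathcal{J}|_{\mathcal{H}}$ is a conjugation commuting with $P_1,P_2$ with $\mathcal{J}_{\mathcal{H}}D\mathcal{J}_{\mathcal{H}}=D^{*}$ and $\mathcal{J}_{\mathcal{H}}C=B^{*}$. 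Granting such a $\mathcal{J}$: since $\mathcal{J}_{\mathcal{H}}$ is conjugate-linear and commutes with $P_1,P_2$ we have $\mathcal{J}_{\mathcal{H}}Z(z)\mathcal{J}_{\mathcal{H}}=Z(z)^{*}$, hence $\mathcal{J}_{\mathcal{H}}H(z)=(I-D^{*}Z(z)^{*})^{-1}\mathcal{J}_{\mathcal{H}}C=(I-D^{*}Z(z)^{*})^{-1}B^{*}=G(z)^{*}$. Thus with $u_j(z):=P_jH(z)$ and $\mathcal{J}_j:=\mathcal{J}_{\mathcal{H}}|_{\mathcal{H}_j}$ we get $P_jG(z)^{*}=\mathcal{J}_j u_j(z)$, so $L_j(z,\zeta)=\langle u_j(\zeta),\mathcal{J}_j u_j(z)\rangle=\langle u_j(z),\mathcal{J}_j u_j(\zeta)\rangle$ by symmetry of the bilinear form of a conjugation, and
\[
|L_j(z,\zeta)|^{2}=\bigl|\langle u_j(z),\mathcal{J}_j u_j(\zeta)\rangle\bigr|^{2}\le\|u_j(z)\|^{2}\,\|u_j(\zeta)\|^{2}=K_j(z,z)K_j(\zeta,\zeta),
\]
which is the desired pointwise estimate; that $L_j$ is a holomorphic kernel and the two displayed decompositions hold is already built in.

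\emph{The main obstacle.} The heart of the matter is producing the compatible conjugation $\mathcal{J}$, i.e.\ a complex symmetric minimal unitary realization of $f$ respecting the coordinate grading. I would try to obtain it in one of three ways. (i) Directly: fixing a reference conjugation $\mathcal{J}_0$ of the above type, the transposed colligation $\mathcal{J}_0 U^{*}\mathcal{J}_0$ again realizes $f$ and is again minimal, so uniqueness of grading-respecting minimal realizations on the bidisk would yield a grading-preserving unitary $\Theta$ with $\Theta U\Theta^{*}=\mathcal{J}_0U^{*}\mathcal{J}_0$, and a standard polar adjustment converts $\mathcal{J}_0\Theta$ into a conjugation with $\mathcal{J}U\mathcal{J}=U^{*}$ --- the delicate point being whether minimal bidisk realizations are unique up to a \emph{grading-preserving} unitary. (ii) Via an explicit de Branges--Rovnyak-type model on the bidisk in which $\mathcal{H}_1,\mathcal{H}_2$ are concrete spaces of holomorphic functions carrying visible conjugations (the two-variable analogue of the complex symmetry of the compressed shift). (iii) Via approximation: the refined decompositions for rational inner functions cited above should supply the symmetric realization when $f$ is rational inner, and for general $f\colon\D^2\to\D$ one would pass to a local-uniform limit through rational inner functions, the families $\{K_j\}$ and $\{L_j\}$ being normal by the a priori bounds $K_j(z,z)\le (1-|z_j|^2)^{-1}$ and $|L_j(z,\zeta)|^2\le K_j(z,z)K_j(\zeta,\zeta)$, with positivity, the two identities, and the pointwise inequalities all stable under the limit. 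The constant case $f\equiv c$ is immediate.
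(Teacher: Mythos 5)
Your realization-theoretic setup is sound: the two decompositions do follow from a unitary colligation exactly as you write them, and your reduction of the pointwise inequality to the existence of a grading-compatible conjugation $\mathcal{J}$ with $\mathcal{J}U\mathcal{J}=U^{*}$ is a correct and attractive reformulation. The problem is that this existence statement is the entire content of the theorem, and you leave it unproved --- you say so yourself by calling it ``the main obstacle.'' Of the three routes you sketch, (i) fails at precisely the point you flag as delicate: minimal Agler realizations on the bidisk are \emph{not} unique up to (grading-preserving) unitary equivalence --- the Agler decomposition of a given $f$ is itself highly non-unique, so the transposed colligation need not be unitarily equivalent to the original and the polar-adjustment argument never gets started. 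Route (ii) is not developed. Route (iii) is in essence the paper's proof, but the cited sums-of-squares decompositions do not by themselves ``supply the symmetric realization'': the Geronimo--Woerdeman identity $p(z)\overline{p(\zeta)}-\tilde p(z)\overline{\tilde p(\zeta)}=(1-z_1\bar\zeta_1)\sum_j A_j(z)\overline{A_j(\zeta)}+(1-z_2\bar\zeta_2)\sum_j B_j(z)\overline{B_j(\zeta)}$ carries no built-in relation between the vector $A$ and its reflection $\tilde A$, so Cauchy--Schwarz gives $|L_j(z,\zeta)|^2\le \|\tilde A(z)\|^2\|A(\zeta)\|^2/|p(z)p(\zeta)|^2$, which is not the claimed bound.

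The missing idea is small but essential: reflect the entire sums-of-squares identity (substitute $(1/\bar z_1,1/\bar z_2)$ and $(1/\bar\zeta_1,1/\bar\zeta_2)$, conjugate, and clear denominators) to obtain a second decomposition in terms of $\tilde A_j,\tilde B_j$, and then \emph{average} the two. The averaged decomposition uses the vectors $A=\tfrac{1}{\sqrt2}[A_1,\dots,A_n,\tilde A_1,\dots,\tilde A_n]^t$ and $B=\tfrac{1}{\sqrt2}[B_1,\dots,B_m,\tilde B_1,\dots,\tilde B_m]^t$, which are self-reflective in the sense that $|A(z)|=|\tilde A(z)|$ and $|B(z)|=|\tilde B(z)|$ pointwise; only with this normalization do Cauchy--Schwarz and your normal-families limit (which you do carry out correctly, with the right a priori bounds, matching the paper's second step) yield $|L_j(z,\zeta)|^2\le K_j(z,z)K_j(\zeta,\zeta)$. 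Until you either perform this averaging for rational inner functions or otherwise actually construct the conjugation $\mathcal{J}$, the proof is incomplete.
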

Notice $L_j(z,z) = \frac{\partial f}{\partial z_j}(z)$.  So, estimates
on the positive semi-definite kernels in this decomposition provide
estimates on the derivatives of $f$. 

The analogous inequalities in one variable are
\[
\left|\frac{f(z) - f(\zeta)}{z-\zeta}\right|^2 \leq
\left|\frac{1-f(z)\overline{f(\zeta)}}{1-z\bar{\zeta}}\right|^2 \leq
\frac{1-|f(z)|^2}{1-|z|^2} \frac{1-|f(\zeta)|^2}{1-|\zeta|^2}
\]
which are consequences of the Schwarz-Pick lemma. 

As an application of this theorem, we are able to reprove a useful
theorem of \cite{GHW08} related to norm preserving extensions of
holomorphic functions on the polydisk and holomorphic retracts of the
polydisk. When working in $\D^{n+1}$ we will typically denote points
by $(z,w)$ where $z \in \D^n$ and $w \in \D$.

\begin{theorem}[\cite{GHW08}] \label{guothm}
Let $V \subset \D^{n+1}$, and suppose $w|_V$ has a nontrivial norm 1
holomorphic extension to $\D^n$.  Then, $V$ is a subset of the graph
of a holomorphic function of $z$.
\end{theorem}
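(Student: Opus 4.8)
The plan is to feed the hypothesized extension into the refined decomposition of Theorem~\ref{mainthm}, applied on two-dimensional slices. Write $g$ for the nontrivial norm-one holomorphic extension of $w|_V$; thus $g$ maps $\D^{n+1}$ into $\overline{\D}$ holomorphically, $g=w$ on $V$, and $g$ is not the coordinate function $w$ itself (that being the trivial extension the hypothesis rules out). If $V=\emptyset$ there is nothing to prove; otherwise $g$ is not a unimodular constant, so $|g|<1$ on $\D^{n+1}$. I describe the argument for the bidisk ($n=1$); the general case is analogous, reducing to pairs of variables and a polydisk version of the decomposition. So let $V\subset\D^2$, $g\colon\D^2\to\overline{\D}$ with $g=z_2$ on $V$ and $g\ne z_2$, and put $\pi(V):=\{z_1:(z_1,z_2)\in V\text{ for some }z_2\}$.

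First I would show $V$ is single-valued over $\pi(V)$. If $(a,b_1),(a,b_2)\in V$ with $b_1\ne b_2$, then $w\mapsto g(a,w)$ is a nonconstant holomorphic self-map of $\D$ fixing the distinct interior points $b_1$ and $b_2$, hence is the identity by Schwarz-Pick rigidity. A one-variable Schwarz-Pick estimate then forces, by letting the fibre variable run to the boundary and sweeping through the base variables one at a time, that $g$ agrees with the coordinate function everywhere, i.e.\ $g\equiv z_2$ --- contradicting nontriviality. Hence $V\subseteq\{(z_1,\varphi(z_1)):z_1\in\pi(V)\}$ for a function $\varphi\colon\pi(V)\to\D$.

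The main point --- and the place Theorem~\ref{mainthm} is genuinely used --- is to promote $\varphi$ to a bounded holomorphic function on $\D$. Apply Theorem~\ref{mainthm} to $g$, obtaining positive semi-definite kernels $K_1,K_2$, holomorphic kernels $L_1,L_2$, and the bound $|L_j|^2\le K_j(z,z)K_j(\zeta,\zeta)$. Since $g(z_1,\varphi(z_1))=\varphi(z_1)$ on $\pi(V)$, restricting the two decompositions of $g$ to the graph $\{(z_1,\varphi(z_1))\}$ and solving for the left-hand sides yields, on $\pi(V)\times\pi(V)$, identities of the shape $1-\varphi(z_1)\overline{\varphi(\zeta_1)}=(1-z_1\bar\zeta_1)\,\tfrac{K_1}{1-K_2}$ and $\varphi(z_1)-\varphi(\zeta_1)=(z_1-\zeta_1)\,\tfrac{L_1}{1-L_2}$, with the kernels evaluated along the graph. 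The diagonal of the first identity shows that $K_2=1$ forces $K_1=0$, hence kills the corresponding rows; combined with $|L_j|^2\le K_jK_j$ this is precisely what is needed to see that the kernel $(z_1,\zeta_1)\mapsto\tfrac{1-\varphi(z_1)\overline{\varphi(\zeta_1)}}{1-z_1\bar\zeta_1}$ is positive semi-definite on $\pi(V)$ even across that degenerate locus. By the Nevanlinna-Pick theorem, $\varphi$ then extends to a holomorphic $h\colon\D\to\overline{\D}$, and $V$ is contained in the graph of $h$.

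The hard step is the third paragraph: manufacturing a one-variable Pick (equivalently Agler) certificate for $\varphi$ from the two-variable refined decomposition of $g$, and in particular handling the locus where $K_1$ degenerates along the graph --- the estimate $|L_j|^2\le K_jK_j$, which the classical Agler decomposition does not provide, is exactly the extra leverage. By comparison, the reduction to slices, the single-valuedness, and the propagation $g(a,\cdot)=\mathrm{id}\Rightarrow g\equiv z_2$ are soft. One must also keep in mind that ``holomorphic function of $z$'' should be read in a way compatible with the proof --- whether $h$ lives on all of $\D^n$ or only near $\pi(V)$ --- but this is immaterial for the geometric applications.
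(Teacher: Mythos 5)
The decisive gap is in your third paragraph. You restrict the decomposition of $g$ to the graph over $\pi V$ --- an arbitrary subset of $\D$ with no structure --- and you must then deal with the degenerate locus where $K_2(v(a,a))=1$, equivalently $K_1(v(a,a))=0$. Your proposed fix ("$K_1=0$ kills the corresponding rows; combined with $|L_j|^2\le K_jK_j$ this is precisely what is needed") is an assertion, not an argument, and the structure of the degeneracy actually works against it: if $K_1(v(a,a))=0$ at one point $a\in\pi V$, then Cauchy--Schwarz for the positive kernel $K_1$ gives $K_1(v(a,b))=0$ for every $b$, so the restricted identity $(1-\varphi(a)\overline{\varphi(b)})(1-K_2(v(a,b)))=(1-a\bar b)K_1(v(a,b))$ forces $K_2(v(a,b))=1$ for all $b$ (the first factor is nonzero), and then Cauchy--Schwarz for $K_2$ together with the diagonal bound $K_2\le 1$ forces $K_2(v(b,b))=1$ for \emph{every} $b\in\pi V$. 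In other words, degeneracy at a single point makes the identity read $0=0$ on all of $\pi V\times\pi V$: it yields no information at all about the Pick kernel of $\varphi$, and nothing in the target kernel is "killed." The refined inequality $|L_1|^2\le K_1(z,z)K_1(\zeta,\zeta)$ only rules out degeneracy at a point where you can show $L_1\ne0$, i.e.\ where the derivative of $g$ along the graph in the base variable is nonzero --- and on a structureless set $\pi V$ you have no derivatives to point to. This is exactly why the paper does not try to certify positivity on $\pi V$ directly: it first uses the Schwarz-lemma dichotomy and the implicit function theorem to produce a \emph{local holomorphic} solution $f$ of $F(z,f(z))=f(z)$ near a fixed point, perturbs to a point where $f'\ne0$ and $\partial_w F\ne1$ (so $\partial_t G=L_1\ne0$ there, whence $K_1>0$ on the diagonal and $|K_2|<1$ nearby), establishes positivity of the Pick kernel only on that small open set via the geometric series, extends by the one-variable Pick theorem, and finally uses fiberwise uniqueness of fixed points (two fixed points in one fiber force $F\equiv w$, which is excluded) to conclude that \emph{all} of $V$ lies on the extended graph. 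Without some version of that detour, your positivity claim on all of $\pi V$ is unproved, and the fully degenerate case cannot be excluded from the restricted identity alone.

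A secondary issue: your remark that the general case "is analogous, reducing to pairs of variables and a polydisk version of the decomposition" glosses over how $n\ge2$ is actually handled. The paper never needs a polydisk version of Theorem \ref{mainthm}; it applies the two-variable theorem to the slices $G(t,w)=F(t,\zeta',w)$ and extends $f$ one variable at a time, propagating the fixed-point identity $F(z,f(z))=f(z)$ with Lemma \ref{dichotomy}. Your route, which discards the fixed-point equation and tries to interpolate $\varphi$ directly on $\pi V\subset\D^n$, would for $n\ge2$ require an $n$-variable Agler--Pick certificate (a single positive kernel no longer suffices), which your sketch does not supply. Your first two paragraphs (single-valuedness over $\pi V$ and the rigidity $g(a,\cdot)=\mathrm{id}\Rightarrow g\equiv w$) do match the paper's first lemma and are fine as sketched.
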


Here \emph{nontrivial norm 1 extension} refers to a function on
$\D^{n+1}$ other than $w$ which agrees with $w$ on $V$ and whose
modulus has supremum norm at most 1.  Guo et al.'s proof involved an
interesting use of the one-variable Denjoy-Wolff theorem.  Guo et al.
used this result to continue some of the work initiated in the paper
\cite{AM03}.  Additionally, they reproved Heath and Suffridge's
characterization of holomorphic retracts of the polydisk, which we now
define.

\begin{definition}
A subset $V \subset \D^n$ is a \emph{holomorphic retract} if there
exists a holomorphic function (\emph{a retraction}) $\rho: \D^n \to
\D^n$ such that
\[
\rho \circ \rho = \rho \text{ and } \rho(\D^n) = V
\]
\end{definition}

Heath and Suffridge characterized all holomorphic retracts of the
polydisk as graphs.

\begin{theorem}[\cite{HS81}] \label{retractthm}
Suppose $V \subset \D^n$ is a holomorphic retract.  Then, after
applying an automorphism of $\D^n$, $V$ can be put into the form
\[
\{ (z, f(z)): z \in \D^k \} 
\]
where $f: \D^k \to \D^{n-k}$ is holomorphic.
\end{theorem}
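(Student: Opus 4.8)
The plan is to deduce Theorem~\ref{retractthm} from Theorem~\ref{guothm} by induction on $n$. The base cases $n\le 1$ are immediate: a nonconstant holomorphic $\rho:\D\to\D$ with $\rho\circ\rho=\rho$ has open image (open mapping theorem) consisting entirely of fixed points, so it agrees with the identity on an open set and hence on all of $\D$; thus the only retracts of $\D$ are $\D$ itself and singletons, each of the stated form.

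For the inductive step I would fix a retraction $\rho=(\rho_1,\dots,\rho_n):\D^n\to\D^n$ with $\rho(\D^n)=V$ and note that $\rho\circ\rho=\rho$ forces $\rho|_V=\mathrm{id}$, so that $\rho_j|_V=z_j|_V$ for every $j$. If $\rho=\mathrm{id}$, then $V=\D^n$ and we are done with $k=n$. Otherwise $\rho_j\neq z_j$ for some $j$, and after a coordinate permutation we may assume $j=n$. Then $\rho_n:\D^n\to\D$ is a function on $\D^n$ different from $z_n$ that agrees with $z_n$ on $V$ and has modulus at most $1$, i.e.\ a nontrivial norm~$1$ extension of $z_n|_V$; so Theorem~\ref{guothm} furnishes a holomorphic $g:\D^{n-1}\to\D$ (its values lie in the open disk, since $V$ is nonempty and contained in the graph of $g$) with $V\subset\{(z',g(z')):z'\in\D^{n-1}\}$.

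The crux is then a descent: $V$ should again be a holomorphic retract, now of $\D^{n-1}$. With $\pi:\D^n\to\D^{n-1}$ the projection off the last coordinate, $\iota(z')=(z',g(z'))$, $W=\pi(V)$, and $\sigma=\pi\circ\rho\circ\iota:\D^{n-1}\to\D^{n-1}$, I would check that $\pi$ and $\iota$ are mutually inverse on the graph of $g$, that $\rho(\mathrm{graph}(g))=V$ (because $\mathrm{graph}(g)\subset\D^n$ gives $V=\rho(V)\subset\rho(\mathrm{graph}(g))\subset\rho(\D^n)=V$), and then, by a short computation using $\rho|_V=\mathrm{id}$, that $\sigma\circ\sigma=\sigma$ and $\sigma(\D^{n-1})=W$. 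Hence $W$ is a holomorphic retract of $\D^{n-1}$, and the inductive hypothesis supplies an automorphism $\tau$ of $\D^{n-1}$ with $\tau(W)=\{(z'',h(z'')):z''\in\D^{k}\}$ for some holomorphic $h:\D^{k}\to\D^{n-1-k}$. Applying the automorphism $\tau\times\mathrm{id}$ of $\D^{n}=\D^{n-1}\times\D$ to $V=\{(z',g(z')):z'\in W\}$ and regrouping coordinates puts $V$ in the form $\{(z'',F(z'')):z''\in\D^{k}\}$ with $F$ built from $h$ and $g\circ\tau^{-1}$; composing with the permutation used above yields the required automorphism of $\D^n$.

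I expect the only genuine obstacle to be this descent step --- recognizing that once Theorem~\ref{guothm} confines $V$ to a graph, the projection $\pi$ conjugates $\rho$ to an honest retraction of $\D^{n-1}$ --- together with the bookkeeping needed to reassemble the successive automorphisms; all of the analytic substance has already been absorbed into Theorem~\ref{guothm}, and hence into the refined Agler decomposition of Theorem~\ref{mainthm}.
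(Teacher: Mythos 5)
Your proof is correct, and its overall roadmap --- use the extension theorem to confine $V$ to a graph over $\D^{n-1}$, project, and induct on dimension --- is the same as the paper's (which the paper in turn credits to Guo et al.). The implementation differs in two ways worth recording. First, you invoke only the weak form of Theorem \ref{guothm} ($V$ is \emph{contained in} a graph), whereas the paper's Lemma \ref{keylemma} runs through the detailed Theorem \ref{maincor} and uses the fixed-point identity $\rho_{n+1}(z,f(z))=f(z)$ to check that $z\mapsto \rho'(z,f(z))$ is idempotent; your verification that $\sigma=\pi\circ\rho\circ\iota$ is idempotent uses only $\rho(\mathrm{graph}\,g)=V\subset\mathrm{graph}\,g$ together with $\iota\circ\pi=\mathrm{id}$ on the graph, which is cleaner (note that $\sigma$ and the paper's retraction of $\pi V$ are literally the same map). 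Second, because Lemma \ref{keylemma} requires the chosen component not to be an automorphism in a single variable, the paper must separately dispose of automorphism components (Lemmas \ref{autolem} and \ref{formlem}); your only dichotomy is $\rho=\mathrm{id}$ versus $\rho_n\ne z_n$ after a permutation. The price is that you obtain only Theorem \ref{retractthm} and not the refined normal form of Theorem \ref{refineretract}, but that is all that was asked. One small point to make explicit: Theorem \ref{guothm}, as the paper actually proves it via Theorem \ref{maincor}, carries the implicit hypothesis that $V$ has more than one $w$-value, so in your application you should observe that if $V$ has a single $z_n$-value the graph containment is trivial (a constant graph) and the degenerate case causes no trouble; your parenthetical that $g$ maps into the open disk is likewise fine by the maximum principle.
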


The proof of Heath and Suffridge involves an impressive and technical
study of properties of Taylor series of retracts.  Guo et al. gave a
new proof by rehashing some of their proof of Theorem \ref{guothm}.
Although it is something of an aside, we think it is worth it to show
a slightly different approach in Section \ref{retractsec}.  While our
proofs are different from both Heath and Suffridge and Guo et al., the
general roadmap of our approach owes a great deal to Guo et al.

\section{Proof of Theorem \ref{mainthm}}

Let us first explain the result for rational inner functions and then
use an approximation argument to prove it for all analytic functions
bounded by one on $\D^2$.

As shown in \cite{wR69} (Theorem 5.2.5), every rational inner function
on $\D^2$ can be represented as
\[
f(z) = \frac{\tilde{p}(z_1,z_2)}{p(z_1,z_2)}
\]
where $p \in \C[z_1,z_2]$ has no zeros in $\D^2$, $\tilde{p}(z_1,z_2)
= z_1^{n} z_2^{m} \overline{p(1/\bar{z}_1, 1/\bar{z}_2)}$ for
appropriate powers $n,m$, and $\tilde{p}$ and $p$ have no common
factor. Necessarily $p$ and $\tilde{p}$ have bidegree at most $(n,m)$
(i.e. degree at most $n$ in $z_1$ and $m$ in $z_2$).

\cite{GW04} proved a detailed version of a two-variable
Christoffel-Darboux formula (see their Proposition 2.3.3 and also
\cite{CW99} and \cite{gK08a}), which can be stated as follows: there
exist polynomials $A_1,\dots, A_n \in \C[z_1,z_2]$ of bidegree at most
$(n-1,m)$ and polynomials $B_1,\dots, B_m \in \C[z_1,z_2]$ of bidegree
at most $(n,m-1)$ such that
\begin{equation} \label{SOS1}
p(z) \overline{p(\zeta)} - \tilde{p}(z) \overline{\tilde{p}(\zeta)} =
(1-z_1\bar{\zeta}_1) \sum_{j=1}^{n} A_j(z) \overline{A_j(\zeta)} +
(1-z_2\bar{\zeta}_2) \sum_{j=1}^{m} B_j(z) \overline{B_j(\zeta)}
\end{equation}
Let $\tilde{A}_j(z) := z_1^{n-1} z_2^m \overline{A_j(1/\bar{z}_1,
  1/\bar{z}_2)}$, $\tilde{B}_j(z) := z_1^{n} z_2^{m-1}
\overline{B_j(1/\bar{z}_1, 1/\bar{z}_2)}$.  If we perform a similar
reflection operation to \eqref{SOS1} (i.e. replace $(z,\zeta)$ with
$((1/\bar{z}_1, 1/\bar{z}_2), (1/\bar{\zeta}_1, 1/\bar{\zeta}_2))$,
take complex conjugates and multiply through by $z_1^n z_2^m
\bar{\zeta}_1^{n} \bar{\zeta}_2^{m}$) we get
\begin{equation} \label{SOS2}
p(z) \overline{p(\zeta)} - \tilde{p}(z) \overline{\tilde{p}(\zeta)} =
(1-z_1\bar{\zeta}_1) \sum_{j=1}^{n} \tilde{A}_j(z) \overline{\tilde{A}_j(\zeta)} +
(1-z_2\bar{\zeta}_2) \sum_{j=1}^{m} \tilde{B}_j(z) \overline{\tilde{B}_j(\zeta)}
\end{equation}
If we average \eqref{SOS1} and \eqref{SOS2} and rewrite using vector
notation, we get
\begin{equation} \label{SOS3}
p(z) \overline{p(\zeta)} - \tilde{p}(z) \overline{\tilde{p}(\zeta)} =
(1-z_1\bar{\zeta}_1) \ip{A(z)}{A(\zeta)} + (1-z_2\bar{\zeta}_2)
\ip{B(z)}{B(\zeta)} 
\end{equation}
where 
\[
A = \frac{1}{\sqrt{2}} [A_1,\dots, A_n, \tilde{A}_1,\dots,
  \tilde{A}_n]^t
\]
\[
B = \frac{1}{\sqrt{2}} [B_1,\dots, B_m, \tilde{B}_1,\dots,
  \tilde{B}_m]^t
\]
and $\ip{v}{w}=w^*v$ denotes the standard complex euclidean inner
product (with dimension taken from context).

If we reflect \eqref{SOS3} in $z$ alone we get
\begin{equation} \label{SOS4}
\tilde{p}(z) p(\zeta) - p(z) \tilde{p}(\zeta) = (z_1-\zeta_1)
\tilde{A}(z) \cdot A(\zeta) + (z_2-\zeta_2) \tilde{B}(z) \cdot
B(\zeta)
\end{equation}
where ``$\cdot$'' denotes the dot product: $v \cdot w = w^t v$.

Now, if we divide \eqref{SOS3} by $p(z)\overline{p(\zeta)}$ and divide
\eqref{SOS4} by $p(z) p(\zeta)$, we get equations of the form
\[
1- f(z) \overline{f(\zeta)} = \sum_{j=1}^2 (1-z_j\bar{\zeta}_j)
K_j(z,\zeta)
\]
\[
f(z) - f(\zeta) = \sum_{j=1}^2 (z_j - \zeta_j) L_j(z,\zeta)
\]
where $K_1,K_2$ are positive semidefinite kernels given explicitly by
\[
K_1(z,\zeta) = \frac{\ip{A(z)}{A(\zeta)}}{p(z) \overline{p(\zeta)}}
\qquad K_2(z,\zeta) = \frac{\ip{B(z)}{B(\zeta)}}{p(z)
  \overline{p(\zeta)}}
\]
and $L_1, L_2$ are holomorphic kernels given explicitly by
\[
L_1(z,\zeta) = \frac{\tilde{A}(z)\cdot A(\zeta)}{p(z) p(\zeta)} \qquad
L_2(z,\zeta) = \frac{\tilde{B}(z)\cdot B(\zeta)}{p(z) p(\zeta)}.
\]
The inequality
\[
|L_j(z,\zeta)|^2 \leq K_j(z,z)K_j(\zeta,\zeta)
\]
follows from Cauchy-Schwarz and the fact that $|A| = |\tilde{A}|$ and
$|B| = |\tilde{B}|$.  

This proves the theorem for rational inner functions.

It is proven in \cite{wR69} (Theorem 5.5.1) that holomorphic functions
$f: \D^2 \to \D$ can be approximated locally uniformly by rational
inner functions.  So, let $\{f^{(i)}\}_i$ be a sequence of rational
inner functions converging locally uniformly to $f$ with corresponding
$K^{(i)}_1, K^{(i)}_2, L^{(i)}_i, L^{(i)}_2$ satisfying the above
formulas/inequalities.  Because of the inequalities
\[
\begin{aligned}
|L^{(i)}_j(z,\zeta)|^2, |K^{(i)}_j(z,\zeta)|^2 &\leq
K^{(i)}_j(z,z)K^{(i)}_j(\zeta,\zeta) \\ &\leq
\frac{1}{(1-|z_1|^2)(1-|z_2|^2)(1-|\zeta_1|^2)(1-|\zeta_2|^2)}
\end{aligned}
\]
the kernel functions are locally bounded and hence form a normal
family.  We can select subsequences so that $K^{(i)}_1 \to K_1$,
$K^{(i)}_2 \to K_2$, $L^{(i)}_1 \to L_1$, $L^{(i)}_2 \to L_2$ locally
uniformly.  Positive semi-definiteness and pointwise inequalities are
preserved under this limit and therefore the statement of the theorem
holds.

\section{Guo et al.'s extension theorem}

As an application we prove Theorem \ref{guothm} in the following
slightly more detailed form.  Except for uniqueness, this is contained
in \cite{GHW08}.  

\begin{theorem} \label{maincor}
Let $V \subset \D^{n+1}$ be a set with more than one $w$-value and let
$\pi V$ be the projection of $V$ onto the first $n$ coordinates.
Suppose $w|_V$ has a nontrivial norm 1 holomorphic extension $F$.
Then, there is a unique holomorphic $f:\D^n \to \D$ such that
$F(z,f(z)) = f(z)$ and $V = \{(z,f(z)): z \in \pi V\}$.
\end{theorem}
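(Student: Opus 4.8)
The plan is to realize $f(z)$, for each $z\in\D^n$, as a fixed point of the one-variable slice map $g_z:=F(z,\cdot)\colon\D\to\cD$, and to use Theorem~\ref{mainthm} to rule out degenerate behaviour of these slices. First note that $F$ is not constant (a constant would force all $w$-values of $V$ to coincide, against hypothesis), so by the maximum principle $g_z$ maps $\D$ into $\D$ whenever it is non-constant. If $z\in\pi V$ and $(z,b)\in V$, then $g_z(b)=F(z,b)=b$, so $g_z$ has a fixed point in $\D$; by the Schwarz--Pick lemma it is then either an automorphism of $\D$ (necessarily the identity or an elliptic rotation about its fixed point) or else has a unique fixed point $b=:f(z)$ in $\D$, at which $|g_z'(f(z))|<1$.

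The key step, and the place Theorem~\ref{mainthm} enters, is to exclude the automorphism alternative for every $z_0\in\pi V$. Suppose $g_{z_0}$ were an automorphism. Freeze all but the first $z$-coordinate at their $z_0$-values, so $\tilde F(z_1,w):=F(z_1,(z_0)_2,\dots,(z_0)_n,w)$ is a non-constant holomorphic map $\D^2\to\D$ with $\tilde F((z_0)_1,\cdot)=g_{z_0}$. Apply Theorem~\ref{mainthm} to $\tilde F$. Since $g_{z_0}$ is a hyperbolic isometry, equality holds in the Schwarz--Pick inequality along the slice $z_1=(z_0)_1$; comparing $1-|\tilde F|^2=(1-|z_1|^2)\tilde K_1+(1-|w|^2)\tilde K_2$ with $|\partial_w\tilde F|=|\tilde L_2|\le\tilde K_2$ on that slice forces $\tilde K_1$ to vanish on the diagonal at every point of the slice $z_1=(z_0)_1$, whence $\tilde L_1$ vanishes there by the inequality $|\tilde L_1|^2\le\tilde K_1\tilde K_1$, and the second Agler decomposition collapses to $\tilde F(z_1,w)=\tilde F((z_0)_1,w)=g_{z_0}(w)$ for all $z_1$. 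Iterating coordinate by coordinate (at each stage the relevant slice map is again the automorphism $g_{z_0}$) yields $F(z,w)=g_{z_0}(w)$ on all of $\D^{n+1}$. But then $F=w$ on $V$ says $g_{z_0}$ fixes every $w$-value of $V$, and since there are at least two of them $g_{z_0}=\mathrm{id}$, i.e. $F=w$, contradicting nontriviality. Hence for each $z\in\pi V$ the map $g_z$ is not an automorphism, so it has a unique fixed point $f(z)\in\D$ with $|g_z'(f(z))|<1$; as every point of $V$ over $z$ must be that fixed point, $V=\{(z,f(z)):z\in\pi V\}$.

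Next I would promote $f$ to a holomorphic function on all of $\D^n$. Since $\partial_w(F(z,w)-w)=g_z'(w)-1\neq0$ at $(z,f(z))$ for $z\in\pi V$, the implicit function theorem gives near each such point a unique holomorphic solution $w=f(z)$ of $F(z,w)=w$ with $|w|<1$; for $z$ near $z_0\in\pi V$ this value is a fixed point of $g_z$ with $|g_z'(f(z))|<1$, so $g_z$ is not an automorphism and $f(z)$ is its unique fixed point in $\D$, whence the local solutions patch to a holomorphic $f\colon U\to\D$ on an open $U\supseteq\pi V$. Now consider the holomorphic self-map $\Phi(z,w):=(z,F(z,w))$ of $\D^{n+1}$, with $\Phi^{(k)}(z,w)=(z,g_z^{(k)}(w))$. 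The family $\{\Phi^{(k)}\}$ is uniformly bounded, hence normal; let $\Psi$ be a locally uniform limit of a subsequence. For each fixed $z\in U$, $g_z$ is not an elliptic automorphism and has the fixed point $f(z)\in\D$, so by the one-variable Denjoy--Wolff theorem $g_z^{(k)}(w)\to f(z)$ for all $w\in\D$; thus $\Psi(z,w)=(z,f(z))$ on $U\times\D$, and by the identity theorem $\Psi(z,w)=(z,\hat f(z))$ on $\D^{n+1}$ for a holomorphic $\hat f\colon\D^n\to\cD$ extending $f$, with $|\hat f|<1$ by the maximum principle. Since $F(z,\hat f(z))-\hat f(z)$ is holomorphic on $\D^n$ and vanishes on $U$, it vanishes identically; renaming $\hat f$ as $f$ gives a holomorphic $f\colon\D^n\to\D$ with $F(z,f(z))=f(z)$ and $V=\{(z,f(z)):z\in\pi V\}$.

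For uniqueness: if $f_1,f_2\colon\D^n\to\D$ both satisfy the conclusion, then for $z\in U$ both $f_1(z)$ and $f_2(z)$ are fixed points of $g_z$ in $\D$, which is unique there, so $f_1=f_2$ on $U$ and hence on $\D^n$ by the identity theorem. I expect the main obstacle to be the exclusion of the automorphism alternative in the second paragraph — in particular checking that the equality case of Schwarz--Pick genuinely propagates through the refined decomposition of Theorem~\ref{mainthm} and then across all coordinate slices; the remaining steps are soft.
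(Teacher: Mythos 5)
Your argument is correct, but it reaches Theorem \ref{maincor} by a genuinely different route from the paper's, and the divergence is instructive. You use Theorem \ref{mainthm} to prove slice rigidity: if one slice $g_{z_0}=F(z_0,\cdot)$ is an automorphism, then equality in Schwarz--Pick on the diagonal of that slice forces $K_1$ to vanish there, the pointwise bound $|L_1(u,v)|^2\le K_1(u,u)K_1(v,v)$ then kills $L_1$ whenever one argument lies on the slice, and the second decomposition collapses to show $F$ is independent of the remaining variables. That is a valid and rather elegant replacement for the unlabeled lemma preceding Lemma \ref{dichotomy}, which the paper proves by an elementary Schwarz-lemma/maximum-principle estimate without invoking Theorem \ref{mainthm} at all. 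Conversely, at the step where the paper actually deploys Theorem \ref{mainthm} --- extending the locally defined implicit function to all of $\D^n$ --- you substitute the Denjoy--Wolff theorem: iterate $\Phi(z,w)=(z,F(z,w))$, extract a normal limit, and identify it on the open set where $f$ is already defined. This is essentially the dynamical device of \cite{GHW08} that the paper is explicitly written to circumvent; the paper instead substitutes $w=g(t)$ into the Agler decomposition of a two-variable slice, shows that $(1-g(t)\overline{g(\tau)})/(1-t\bar{\tau})=K_1/(1-K_2)$ is positive semi-definite near a point, and concludes by Pick interpolation that $g$ extends to the whole disk, one variable at a time. Your route is shorter and softer but reintroduces the iteration argument and spends the refined decomposition on rigidity, where elementary tools suffice; the paper's route stays entirely within the positivity framework and makes the inequality $|L_j(z,\zeta)|^2\le K_j(z,z)K_j(\zeta,\zeta)$ the engine of the extension step itself. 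The supporting details in your write-up (the implicit-function-theorem dichotomy, patching of local solutions via uniqueness of the attracting fixed point, the identity-theorem upgrade of the subsequential limit, and the uniqueness argument) all check out.
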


In this section we generally follow the convention of denoting points
in $\D^{n+1}$ by $(z,w)$ with $z \in \D^n$ and $w \in \D$.

\begin{lemma} If $f: \D^{n+1} \to \D$ is holomorphic, $\phi$ is an
  automorphism of $\D$, and there exists a $z_0 \in \D^n$ such that
  $f(z_0, w) = \phi(w)$ for all $w$, then $f(z,w) = \phi(w)$ for all
  $(z,w)$.
\end{lemma}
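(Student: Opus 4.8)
The plan is to strip off the automorphisms, reduce to a two-dimensional slice, and then let Theorem~\ref{mainthm} do the work. For the normalization: choose an automorphism $\psi$ of $\D^n$ with $\psi(0)=z_0$ (a coordinatewise M\"obius map will do) and replace $f$ by $g:=\phi^{-1}\circ f\circ(\psi\times\mathrm{id}_{\D})$. Then $g:\D^{n+1}\to\D$ is holomorphic and $g(0,w)=\phi^{-1}(f(z_0,w))=\phi^{-1}(\phi(w))=w$ for every $w\in\D$. Since proving $g(z,w)\equiv w$ is equivalent to the lemma, we may henceforth assume $\phi=\mathrm{id}$ and $z_0=0$, i.e.\ $g:\D^{n+1}\to\D$ with $g(0,\cdot)=\mathrm{id}$.

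Next I would reduce to $n=1$. Fix any $(z^*,w^*)\in\D^{n+1}$ with $z^*\neq 0$, pick $\rho\in(0,1)$ with $\rho>|z^*_j|$ for all $j$, and set $\zeta:=z^*/\rho\in\D^n$; then $t\mapsto t\zeta$ maps $\D$ into $\D^n$ and sends $\rho$ to $z^*$. Hence $G(t,w):=g(t\zeta,w)$ defines a holomorphic map $G:\D^2\to\D$ with $G(0,w)=w$ for all $w$. If the lemma is known on the bidisk, then $G(t,w)\equiv w$, so $g(z^*,w^*)=G(\rho,w^*)=w^*$; as $(z^*,w^*)$ was arbitrary (the case $z^*=0$ being immediate), the lemma follows.

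It remains to treat a holomorphic $G:\D^2\to\D$ with $G(0,w)=w$ for all $w$; write its variables as $(a,b)$. Apply Theorem~\ref{mainthm} to $G$, getting positive semidefinite $K_1,K_2$ and holomorphic $L_1,L_2$ with the two decompositions and $|L_j|^2\le K_jK_j$. Setting $a=a'=0$ in the difference decomposition gives $b-b'=(b-b')L_2((0,b),(0,b'))$, so $L_2((0,b),(0,b'))\equiv 1$ (by continuity), whence $1\le K_2((0,b),(0,b))\,K_2((0,b'),(0,b'))$ and in particular $K_2((0,b),(0,b))\ge 1$. Setting $a=a'=0$ and $b=b'$ in the first decomposition and discarding the nonnegative $K_1$-term gives $K_2((0,b),(0,b))\le 1$; hence $K_2((0,b),(0,b))=1$ and $K_1((0,b),(0,b))=0$. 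Because $K_1$ is positive semidefinite, a zero on its diagonal kills the corresponding row, so $K_1((0,b),\cdot)\equiv 0$; in particular $K_1((a,b),(0,0))=0$ for all $(a,b)$. Now put $(a',b')=(0,0)$ in the first decomposition: using $G(0,0)=0$ this reads $1=K_1((a,b),(0,0))+K_2((a,b),(0,0))=K_2((a,b),(0,0))$, and positive semidefiniteness of the $2\times 2$ principal submatrix of $K_2$ at $(a,b)$ and $(0,0)$ forces $K_2((a,b),(a,b))\ge 1$. Feeding this into the diagonal of the first decomposition gives $1-|G(a,b)|^2=(1-|a|^2)K_1((a,b),(a,b))+(1-|b|^2)K_2((a,b),(a,b))\ge 1-|b|^2$, i.e.\ $|G(a,b)|\le|b|$. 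Since $|G(0,b)|=|b|$, the maximum modulus principle applied to $a\mapsto G(a,b)$ forces $G(a,b)\equiv b$, completing the bidisk case.

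I expect the bidisk case to be the only real obstacle: one must pin down the sharp identities $K_2((0,b),(0,b))=1$ and $K_1((0,b),(0,b))=0$ and then use positive semidefiniteness to propagate them to all of $\D^2$. It is precisely here --- through the auxiliary kernels $L_1,L_2$ and the inequality $|L_j|^2\le K_jK_j$ --- that the refinement in Theorem~\ref{mainthm} is essential; the classical Agler decomposition alone would not suffice. The two reduction steps are routine.
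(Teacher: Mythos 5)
Your proof is correct, but it takes a genuinely different route from the paper. After the same normalization ($\phi=\mathrm{id}$, $z_0=0$), the paper never touches Theorem~\ref{mainthm}: it forms $G(z,w)=\frac{f(z,w)-w}{1-\bar w f(z,w)}$, applies the Schwarz lemma in $z$ to get $|G(z,w)|\le|z|_\infty$, unwinds this into the bound $|w-f(z,w)|^2\le(1-|w|^2)/(1-|z|_\infty^2)$, and finishes with the maximum principle in $w$ as $r\nearrow 1$ --- a purely one-variable, self-contained argument. You instead slice down to the bidisk and feed $G$ into Theorem~\ref{mainthm}; your chain of deductions there ($L_2\equiv 1$ on the slice $a=0$, hence $K_2\ge 1$ and $K_1=0$ on the diagonal of that slice, propagation of the vanishing of $K_1$ via positive semidefiniteness, $K_2((a,b),(a,b))\ge 1$ from the $2\times 2$ minor, and finally $|G(a,b)|\le|b|$ plus maximum modulus) is sound, and there is no circularity since Theorem~\ref{mainthm} is established independently in Section~2. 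The trade-off: the paper's argument is elementary and works in all dimensions at once, whereas yours is a nice showcase of how the refined decomposition (specifically the $L_j$ kernels and $|L_j(z,\zeta)|^2\le K_j(z,z)K_j(\zeta,\zeta)$) encodes rigidity, at the cost of invoking the heaviest tool in the paper for a lemma that does not need it. One caveat on your closing remark: the refinement is essential only for \emph{your} route; since the lemma follows from the Schwarz lemma alone, it is an overstatement to suggest that something beyond the classical Agler decomposition is required to prove it.
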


\begin{proof} We may assume $\phi(w) = w$ and $z_0 = (0,\dots,0)$.  Then,
\[
G(z,w) = \frac{f(z,w) - w}{1-\bar{w} f(z,w)}
\]
is holomorphic in $z$, $G(0,w) = 0$, and $|G| \leq 1$.

Write $|z|_{\infty}$ for the maximum modulus of the components of $z$.
By the Schwarz lemma,
\[
|G(z,w)|^2 \leq |z|_{\infty}^2
\]
and
\[
1-|z|_{\infty}^2 \leq 1-|G(z,w)|^2  =
\frac{(1-|w|^2)(1-|f(z,w)|^2)}{|1-\bar{w}f(z,w)|^2} \leq
\frac{1-|w|^2}{|1-\bar{w} f(z,w)|^2}
\]
and so
\[
|w - f(z,w)|^2 \leq |1-\bar{w}f(z,w)|^2 \leq \frac{1-|w|^2}{1-|z|_{\infty}^2}.
\]
Then, by the maximum principle
\[
\sup_{w \in r\D} |w-f(z,w)|^2 \leq \frac{1-r^2}{1-|z|_{\infty}^2}
\]
which implies $f(z,w) \equiv w$ after letting $r \nearrow 1$.
\end{proof}

\begin{lemma} \label{dichotomy}
Let $F: \D^{n+1} \to \D$ be holomorphic and suppose $F(z_0,w_0) = w_0$
at some point.  Necessarily, 
\begin{equation} \label{dicteq}
|\frac{\partial F}{\partial w}(z_0,w_0)| \leq 1.
\end{equation}
If equality holds in \eqref{dicteq}, then $F(z, w) = \phi(w)$ for
some automorphism $\phi$.  If strict inequality holds in
\eqref{dicteq}, then there exists a unique holomorphic function
$f:\Omega \to \D$ defined in a neighborhood $\Omega$ of $z_0$ such
that $f(z_0) = w_0$ and $F(z,f(z)) = f(z)$ where defined.
\end{lemma}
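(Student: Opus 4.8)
The plan is to reduce the whole statement to one-variable complex analysis together with the holomorphic implicit function theorem. First I would freeze the first variable at $z_0$ and consider $g(w) := F(z_0,w)$, a holomorphic self-map of $\D$ with $g(w_0) = w_0$. Since $g'(w_0) = \frac{\partial F}{\partial w}(z_0,w_0)$, the Schwarz--Pick lemma immediately yields
\[
\Bigl|\frac{\partial F}{\partial w}(z_0,w_0)\Bigr| = |g'(w_0)| \leq \frac{1-|g(w_0)|^2}{1-|w_0|^2} = 1,
\]
which is \eqref{dicteq}.

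For the equality case, the rigidity half of the Schwarz--Pick lemma says that $|g'(w_0)| = 1$ forces $g$ to be an automorphism of $\D$; thus $F(z_0,w) = \phi(w)$ for all $w \in \D$ with $\phi$ an automorphism of $\D$. The preceding lemma (applied with $f = F$) then promotes this to $F(z,w) = \phi(w)$ on all of $\D^{n+1}$, which is exactly the claimed conclusion.

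For the strict-inequality case I would set $H(z,w) := F(z,w) - w$, so that $H(z_0,w_0) = 0$ and
\[
\frac{\partial H}{\partial w}(z_0,w_0) = \frac{\partial F}{\partial w}(z_0,w_0) - 1 \neq 0,
\]
since the first term has modulus strictly less than $1$. The holomorphic implicit function theorem then provides a neighborhood $\Omega$ of $z_0$ and a holomorphic $f$ on $\Omega$, unique among holomorphic solutions near $z_0$ with $f(z_0) = w_0$, satisfying $H(z,f(z)) = 0$, i.e.\ $F(z,f(z)) = f(z)$. Shrinking $\Omega$ if necessary, continuity of $f$ together with $f(z_0) = w_0 \in \D$ guarantees $f(\Omega) \subset \D$, so the composition $F(z,f(z))$ is legitimate.

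There is no serious obstacle; the points requiring a little care are that the uniqueness coming from the implicit function theorem is a germ statement (any two local solutions agree on a common neighborhood of $z_0$), and that one should explicitly record that $f$ maps into $\D$ rather than merely into $\C$. The only place the hypotheses of the earlier lemma must be matched exactly is in the equality case, where the constancy of $F(z_0,\cdot)$ as an automorphism is precisely the input that lemma requires.
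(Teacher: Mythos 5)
Your proof is correct and, for the inequality, the equality case, and the existence half of the strict-inequality case, it is the same argument as the paper's: Schwarz--Pick applied to $w \mapsto F(z_0,w)$, the rigidity half of Schwarz--Pick followed by the preceding lemma, and the holomorphic implicit function theorem applied to $F(z,w)-w$. Where you diverge is uniqueness. You obtain it from the local uniqueness clause of the implicit function theorem, which (as you note yourself) is a germ statement and also uses the normalization $f(z_0)=w_0$. The paper instead argues globally: in the strict-inequality case the set $\{(z,w): F(z,w)=w\}$ meets each slice $\{z_1\}\times\D$ in at most one point, because two distinct fixed points of $F(z_1,\cdot)$ would force $F(z_1,w)\equiv w$ by Schwarz--Pick and then $F(z,w)\equiv w$ by the preceding lemma, contradicting strict inequality. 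That yields uniqueness of $f$ on any $\Omega$ with no reference to the value at $z_0$ and no shrinking of neighborhoods, and it is this stronger fixed-point dichotomy that is actually leaned on later, in Lemma \ref{cruciallem} and Theorem \ref{maincor}, where $f$ is extended to all of $\D^n$ and must be shown unique there. Your argument does prove the lemma as stated; the paper's uniqueness step simply buys more for the subsequent applications.
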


\begin{proof} By the Schwarz lemma
\[
\frac{1 - |F(z_0,w_0)|^2}{1-|w_0|^2} = 1 \geq |\frac{\partial
  F}{\partial w} (z_0,w_0)|.
\]
If equality occurs then $F(z_0,w)$ is an automorphism of $\D$ and by
the previous lemma $F(z,w) = \phi(w)$ identically.  

If equality does not hold, then setting $G(z,w) = F(z,w) - w$ we see
that $\frac{\partial G}{\partial w}(z_0,w_0)  = \frac{\partial
  F}{\partial w}(z_0,w_0) -1 \ne 0$.  By the implicit function
theorem, there exists a function of $z$ in a neighborhood of $z_0$
such that $G(z,f(z)) = 0$; i.e. $F(z,f(z)) = f(z)$. 

To see that $f$ is unique, we note that if $F(z_1,w_1) = w_1$, there
cannot be a different $w_2 \ne w_1$ such that $F(z_1, w_2) = w_2$, for
then $F(z_1, w) \equiv w$ and hence $F(z,w) \equiv w$.  By assumption
this cannot occur, so any point $(z_1,w_1)$ satisfying $F(z_1,w_1) =
w_1$ is uniquely determined by the $z$ component.  In particular,
$F(z,f(z)) = f(z)$ cannot hold for two different choices of $f:\Omega
\to \D$.
\end{proof}

The final lemma is the most important and it utilizes the main
theorem.

\begin{lemma} \label{cruciallem}
Let $F: \D^{n+1} \to \D$ be holomorphic.  Suppose $F(z_0,w_0) = w_0$,
$F(z_1,w_1) = w_1$, $F(z_2,w_2) \ne w_2$, where $w_0 \ne w_1$.  Then
there exists a unique $f:\D^n \to \D$ such that $F(z,f(z)) = f(z)$.
In particular, if $F(z_3,w_3) = w_3$, then $f(z_3) = w_3$.
\end{lemma}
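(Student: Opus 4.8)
The plan is to apply Theorem \ref{mainthm} to $F$ (viewed as a holomorphic map $\D^{n+1}\to\D$, grouping the last variable as $w$ and writing points as $(z,w)$) at the two fixed points to force rigidity in the $w$-direction, and then use Lemma \ref{dichotomy} to conclude. First I would observe that, by Lemma \ref{dichotomy} applied at $(z_2,w_2)$ where $F(z_2,w_2)\neq w_2$: wait — actually the hypothesis we have is $F(z_2,w_2)\neq w_2$, which rules out the possibility that $F(z,w)\equiv\phi(w)$ for an automorphism $\phi$ fixing things appropriately; more precisely, if $F$ had a fixed point with $|\partial F/\partial w|=1$ there, then $F(z,w)=\phi(w)$ for an automorphism $\phi$, and since $F(z_0,w_0)=w_0$ and $F(z_1,w_1)=w_1$ with $w_0\neq w_1$ we'd get $\phi$ fixing two points, hence $\phi=\mathrm{id}$, hence $F(z,w)\equiv w$, contradicting $F(z_2,w_2)\neq w_2$. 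So at \emph{every} fixed point $(z_1,w_1)$ of $F$ we must in fact have strict inequality $|\partial F/\partial w(z_1,w_1)|<1$, and Lemma \ref{dichotomy} then produces a \emph{local} holomorphic solution $f$ near each fixed $z$-value with $F(z,f(z))=f(z)$; the content of this lemma is upgrading "local" to "all of $\D^n$" and establishing uniqueness.

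The key step is the following estimate, obtained from the refined Agler decomposition for $F$. Write the decomposition $1-F(z,w)\overline{F(\zeta,\eta)}=\sum_{j=1}^n(1-z_j\bar\zeta_j)K_j + (1-w\bar\eta)K_{n+1}$ and $F(z,w)-F(\zeta,\eta)=\sum_{j=1}^n(z_j-\zeta_j)L_j+(w-\eta)L_{n+1}$ with $|L_{n+1}|^2\le K_{n+1}(\cdot,\cdot)K_{n+1}(\cdot,\cdot)$. Evaluating the first identity on the diagonal at a fixed point $(z_*,w_*)$ (so $F(z_*,w_*)=w_*$) gives $1-|w_*|^2=\sum_{j=1}^n(1-|z_{*,j}|^2)K_j((z_*,w_*),(z_*,w_*)) + (1-|w_*|^2)K_{n+1}$; since all terms are nonnegative this forces each $K_j((z_*,w_*),\cdot)\le$ something, and in particular I would extract that $K_{n+1}$ at a fixed point is at most $1$, with the "slack" $(1-K_{n+1})(1-|w_*|^2)$ controlling $\sum_j (1-|z_{*,j}|^2)K_j$. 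The crucial point is that having \emph{two} fixed points $(z_0,w_0)$ and $(z_1,w_1)$ and using that $F(z_0,w_0)-F(z_1,w_1)=w_0-w_1\neq 0$ in the second identity, combined with the Cauchy–Schwarz-type bounds $|L_j(z,\zeta)|^2\le K_j(z,z)K_j(\zeta,\zeta)$, should let me compare the "horizontal" kernels $K_1,\dots,K_n$ at the two fixed points and show they cannot simultaneously be too large — ultimately yielding a quantitative version of $|\partial F/\partial w|<1$ that is \emph{uniform}, i.e. bounded away from $1$, along the whole solution set. A uniform bound $|\partial F/\partial w(z,f(z))|\le c<1$ wherever $f$ is defined is exactly what prevents the implicit-function-theorem solution from running off to the boundary, so a connectedness/continuation argument extends $f$ to all of $\D^n$.

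Uniqueness is the easy part: as in the proof of Lemma \ref{dichotomy}, if $F(z_1,w_1)=w_1$ and also $F(z_1,w_2)=w_2$ with $w_2\neq w_1$, then $F(z_1,\cdot)$ is the identity on $\D$, hence by the first Lemma $F(z,w)\equiv w$, contradicting $F(z_2,w_2)\neq w_2$; so the fixed point over each $z$ is unique, forcing any two global solutions to coincide. The final sentence — if $F(z_3,w_3)=w_3$ then $f(z_3)=w_3$ — is then immediate from this uniqueness of the fixed point over $z_3$. I expect the main obstacle to be the middle step: squeezing out of the two Agler identities a quantitative, \emph{uniform} upper bound $c<1$ on $|\partial F/\partial w|$ along the solution set (as opposed to the pointwise $<1$ that Lemma \ref{dichotomy} gives for free), since it is this uniformity that powers the globalization. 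The inequality $|L_{n+1}|^2\le K_{n+1}K_{n+1}$ together with the diagonal evaluation $L_{n+1}((z_*,w_*),(z_*,w_*))=\partial F/\partial w(z_*,w_*)$ and the observation that $K_{n+1}\le 1$ at fixed points, with equality propagating globally via the rigidity lemmas, is the mechanism I would push on to get this.
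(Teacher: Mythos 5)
Your setup (ruling out the automorphism case via the three hypotheses) and your uniqueness argument are fine and match the paper. But the core of your plan has two genuine gaps.

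First, you begin by writing an Agler decomposition $1-F(z,w)\overline{F(\zeta,\eta)}=\sum_{j=1}^n(1-z_j\bar\zeta_j)K_j+(1-w\bar\eta)K_{n+1}$ for $F$ on $\D^{n+1}$. Theorem \ref{mainthm} is stated and proved only for the bidisk, and for good reason: in three or more variables not every holomorphic $F:\D^{n+1}\to\D$ admits such a decomposition (the Schur--Agler class is a proper subclass of the Schur class when $n+1\geq 3$), so this step is unavailable for $n\geq 2$. The paper sidesteps this by freezing all but one of the $z$-variables and applying the two-variable theorem to the slice $G(t,w)=F(t,\zeta',w)$, extending $f$ one variable at a time.

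Second, your globalization mechanism --- a \emph{uniform} bound $|\partial F/\partial w(z,f(z))|\leq c<1$ along the solution set --- is not established by anything you write and is false in general: $z\mapsto \partial F/\partial w(z,f(z))$ is merely a holomorphic function of modulus $<1$ pointwise (that is all Lemma \ref{dichotomy} gives), and such a function can have supremum equal to $1$ (already $F(z,w)=zw$ has $\partial_w F(z,0)=z$ on its fixed-point graph). Moreover, even granting such a bound, an implicit-function continuation can still fail because $|f(z)|$ might tend to $1$ over an interior point of $\D^n$; you do not address this. The paper's actual mechanism is quite different and is the real content of the lemma: restricting to the graph of $g(t)=f(t,\zeta')$ turns the two-variable decomposition into the identity $(1-g(t)\overline{g(\tau)})(1-K_2(v(t,\tau)))=(1-t\bar\tau)K_1(v(t,\tau))$, the refined inequality $|L_1|\leq K_1$ on the diagonal shows $K_2\neq 1$ there, and hence $(1-g(t)\overline{g(\tau)})/(1-t\bar\tau)=K_1\sum_{j\geq 0}K_2^j$ is positive semi-definite on an open set; the Pick interpolation theorem then extends $g$ to a holomorphic self-map of all of $\D$ in one stroke, with no continuation or uniform derivative bound needed. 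Without some substitute for this Pick-theorem step, your outline does not close.
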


\begin{proof}
By the three assumptions, $F$ cannot be an automorphism as a function
of $w$.  So, we are in the second case of the previous lemma and there
locally (say on a domain $\Omega \subset \D^n$) exists a unique
$f:\Omega \to \D$ satisfying $F(z,f(z)) = f(z)$.  We need to extend
$f$ to all of $\D^n$.

We will show $f$ can be extended one variable at a time.  Letting
$\zeta =(\zeta_1,\zeta_2,\dots,\zeta_n)=(\zeta_1,\zeta') \in \Omega$,
we plan to show $f$ can be extended to $\D\times \{\zeta'\}$ in such a
way that the identity $F(z,f(z)) = f(z)$ is preserved.  By Lemma
\ref{dichotomy}, the identity will then extend to a unique function on
an open neighborhood of $\D\times \{\zeta'\}$.  So, given any other
point $\eta = (\eta_1,\dots, \eta_n)$, we will be able to successively
extend $f$ to $(\eta_1,\zeta_2,\dots,\zeta_n),
(\eta_1,\eta_2,\zeta_3,\dots), \dots, (\eta_1,\dots,\eta_n)$.  This
will imply $f$ is holomorphic on all of $\D^n$ and $F(z,f(z))=f(z)$.

For this argument we will use $t$ for the first coordinate of $z$ and
write $z = (t,z')$ (we are avoiding ``$z_1$'' since we have used this in a
different way in the lemma statement).  

Let $g(t) = f(t,\zeta')$ and $G(t,w) = F(t, \zeta', w)$.  Now $g$ is
holomorphic in some neighborhood of $\zeta_1$ and $G(t,g(t)) = g(t)$
holds in said neighborhood.  If $g$ is constant, then clearly $g$
extends to be holomorphic on $\D$ and $G(t,g(t)) = g(t)$ holds on all
of $\D$.  So, suppose $g$ is nonconstant.  Perturb $\zeta_1$ if
necessary to make $g'(\zeta_1) \ne 0$, and let $\partial_t,
\partial_w$ denote the partial derivatives with respect to $t,w$,
respectively.

Then, $\partial_t G(t,g(t))+\partial_w G(t,g(t)) \partial_t g(t) =
\partial_t g(t)$, so $\partial_t G(t,g(t)) = \partial_t
g(t)(1-\partial_w G(t,g(t)))$. Now, $\partial_t G(\zeta_1,g(\zeta_1))
\ne 0$ by the previous lemma (i.e. $\partial_w G(t,g(t))$ cannot equal
$1$, since this would imply $G$ is an automorphism as a function of
$w$) and since $\partial_t g(\zeta_1) \ne 0$.

We apply the main theorem to $G(t,w)$.  Theorem \ref{mainthm} implies
\[
1-G(t,w) \overline{G(\tau,\eta)} = (1-t\bar{\tau})K_1 +
(1-w\bar{\eta})K_2
\]
with $K_1,K_2$ positive semi-definite, where $K_1,K_2$ should be
evaluated at $((t,w),(\tau,\eta))$.  

Substituting $w = g(t), \eta = g(\tau)$ for $t,\tau$ in a neighborhood
of $\zeta_1$, and writing $v(t,\tau) = ((t,g(t)),(\tau,g(\tau)))$ for
short, we get
\[
1-g(t)\overline{g(\tau)} = (1-t\bar{\tau})K_1(v(t,\tau)) +
(1-g(t)\overline{g(\tau)})K_2(v(t,\tau)) 
\]
or
\begin{equation} \label{oreq}
(1-g(t)\overline{g(\tau)})(1-K_2(v(t,\tau))) =
(1-t\bar{\tau})K_1(v(t,\tau))
\end{equation}

We cannot have $K_2(v(\zeta_1,\zeta_1)) = 1$ for then
$K_1(v(\zeta_1,\zeta_1)) = 0$, which by the main theorem implies a
contradiction.  Specifically,
\[
|\partial_t G(\zeta_1,g(\zeta_1))| = |L_1(v(\zeta_1,\zeta_1))| \leq
|K_1(v(\zeta_1,\zeta_1))| = 0
\]
which is not the case as $\partial_t G(\zeta_1,g(\zeta_1)) \ne 0$.

So, $|K_2(v(t,\tau))| < 1$ for $t,\tau$ in some open set around
$\zeta_1$.  By \eqref{oreq}, for such $t, \tau$
\[
\frac{1-g(t)\overline{g(\tau)}}{1-t\bar{\tau}} = \frac{K_1(v(t,\tau))}
     {1-K_2(v(t,\tau))} = K_1(v(t,\tau)) \sum_{j=0}^{\infty}
     K_2(v(t,\tau))^j
\]
is positive semi-definite.  By the Pick interpolation theorem, $g$
extends to be holomorphic on all of $\D$. (See \cite{AM02} for the
Pick interpolation theorem from this point of view.)  Also, $G(t,g(t))
= g(t)$ then automatically holds on all of $\D$ by analyticity.  This
completes the proof.
\end{proof}

Theorem \ref{maincor} is just a rephrasal of this lemma.

\section{Retracts and Theorem \ref{retractthm}} \label{retractsec}
We prove the following refinement of Theorem \ref{retractthm} (which
can also be found in \cite{GHW08}).  

\begin{theorem} \label{refineretract}
Suppose $V \subset \D^n$ is a holomorphic retract.  Then, after
applying an automorphism of $\D^n$, $V$ can be put into the form
\[
\{ (z,e(z), f(z)): z \in \D^k \} 
\]
where $e: \D^k \to \D^m$ is a coordinate function in each component
and $f: \D^k \to \D^{n-m-k}$ is holomorphic with no components equal
to an automorphism as a function of a single variable.
\end{theorem}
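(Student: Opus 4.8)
The plan is to induct on $n$, stripping off one coordinate at a time and using Lemma~\ref{cruciallem} (which rests on Theorem~\ref{mainthm}) to understand the stripped coordinate. Write the retraction as $\rho=(\rho_1,\dots,\rho_n)\colon\D^n\to\D^n$, so $\rho\circ\rho=\rho$ and $\rho(\D^n)=V$; in particular $\rho$ fixes every point of $V$, so $\rho_i(v)=v_i$ for $v\in V$. If $\rho$ is the identity then $V=\D^n$ and we are done, so assume some $\rho_j\not\equiv z_j$; after a coordinate permutation (an automorphism of $\D^n$) take $j=n$. Write $\pi\colon\D^n\to\D^{n-1}$ for the projection forgetting the last coordinate and set $V'=\pi(V)$.

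First suppose $z_n$ is constant on $V$. Since $\rho(\D^n)=V$, the component $\rho_n$ is that same constant; after a M\"obius change in the last variable we may assume $z_n\equiv 0$ on $V$ and $\rho_n\equiv 0$. Then $\rho'(z'):=\pi\big(\rho(z',0)\big)$ is easily checked to be a holomorphic retraction of $\D^{n-1}$ onto $V'$, and the induction hypothesis applied to $V'$, together with re-inserting the last coordinate (the constant $0$, which is not an automorphism of any variable) as an extra component of $f$, completes this case. Now suppose $z_n$ takes more than one value on $V$. Then Lemma~\ref{cruciallem} applies to $\rho_n$ regarded as a function of $(z',z_n)$ — the required fixed points of $\rho_n$ are supplied by $V$, and a non-fixed point exists because $\rho_n\not\equiv z_n$ — and produces a holomorphic $h\colon\D^{n-1}\to\D$ with $\rho_n\big(z',h(z')\big)=h(z')$ for all $z'$ and with $V=\{(z',h(z'))\colon z'\in V'\}$. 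Thus $V$ is \emph{exactly} the graph of $h$ over $V'$, and this lets one verify that $\rho'(z'):=\pi\big(\rho(z',h(z'))\big)$ is a well-defined holomorphic self-map of $\D^{n-1}$ (the point $\rho(z',h(z'))$ lies in $V$, hence its last coordinate is automatically $h$ of its first $n-1$ coordinates), that $\rho'\circ\rho'=\rho'$, and that $\rho'(\D^{n-1})=V'$; so $V'$ is again a holomorphic retract of $\D^{n-1}$.

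By the induction hypothesis there is an automorphism $\Phi'$ of $\D^{n-1}$ with $\Phi'(V')=\{(z,e'(z),f'(z))\colon z\in\D^k\}$ in the stated form. Applying $\Phi'\times\mathrm{id}$ to $\D^n$ turns $V$ into $\{(z,e'(z),f'(z),H(z))\colon z\in\D^k\}$, where $H$ is $h\circ(\Phi')^{-1}$ evaluated along the graphing map $z\mapsto(z,e'(z),f'(z))$. Finally we decide where the last coordinate belongs: if $H(z)=\phi(z_i)$ for some $i\le k$ and some $\phi\in\mathrm{Aut}(\D)$, postcompose with the automorphism of $\D^n$ replacing $z_n$ by $\phi^{-1}(z_n)$, which turns the last component into the coordinate function $z_i$ and leaves $e'(z),f'(z)$ untouched (they are functions of the free variables only); then absorb $z_i$ into $e$ after a permutation. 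Otherwise $H$ is not an automorphism of a single free variable, so absorb it into $f$, observing that the components of $f'$ are unchanged and hence still not such automorphisms. Either way $V$ is brought into the desired form, and the base case $n=0$ is trivial.

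The step demanding the most care is the reduction in the main case: it is essential to use the full conclusion of Lemma~\ref{cruciallem}, namely that $V$ \emph{equals} the graph of $h$ rather than merely being contained in it, since this is exactly what forces $\rho(z',h(z'))$ to have last coordinate $h$ of its remaining coordinates and thereby makes $\rho'=\pi\circ\rho\circ(z'\mapsto(z',h(z')))$ idempotent. A secondary point is the bookkeeping of the last paragraph — verifying that the final change of variable in $z_n$ disturbs neither $e'$ nor $f'$, and interpreting ``automorphism as a function of a single variable'' relative to the free coordinates $z\in\D^k$.
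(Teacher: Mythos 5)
Your argument is correct and follows essentially the same route as the paper: induct on dimension, use Lemma \ref{cruciallem} (hence Theorem \ref{mainthm}) to realize $V$ as the graph of a function $h$ over its projection $V'$, check that $V'$ is again a retract, and apply the induction hypothesis; your inline verification that $z'\mapsto\pi\big(\rho(z',h(z'))\big)$ is a retraction onto $V'$ is exactly the content of the paper's Lemma \ref{keylemma}. The one organizational difference is that the paper disposes of single-variable-automorphism components up front (Lemmas \ref{autolem} and \ref{formlem}, together with the hypothesis in Lemma \ref{keylemma} that the peeled component not be such an automorphism), whereas you peel off any component with $\rho_n\not\equiv z_n$ and only afterwards decide whether the resulting $H$ is an automorphism of a free variable (absorb into $e$) or not (absorb into $f$). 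That final sorting step is a genuinely useful addition rather than mere bookkeeping: the paper's write-up does not explicitly address the possibility that the graphing function, restricted to the graph presentation of $\pi V$, becomes an automorphism of a single free variable (consider the diagonal of $\D^3$ retracted by averaging the coordinates, where the peeled component is $(z_1+z_2+z_3)/3$ but the induced $H$ is the identity in the free variable), and your last paragraph is precisely what handles that case.
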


An example for $e$ might be $e(z_1,z_2) = (z_1,z_1,z_1,z_2,z_2)$.  

\begin{lemma} \label{keylemma}
Suppose $V \subset \D^{n+1}$ is a holomorphic retract, with retraction
$\rho(z,w) = (\rho_1, \dots, \rho_n, \rho_{n+1}) = (\rho',
\rho_{n+1})$ where we assume $\rho_{n+1}$ is not an automorphism as a
function of one variable.  Then, there exists $f: \D^n \to \D$,
holomorphic, such that $(z,w) \mapsto (\rho'(z, f(z)), f(z))$ is a
retraction of $V$,
\begin{equation} \label{lemmaeq}
V = \{ (z,f(z)): z \in \pi V\}
\end{equation}
and $\pi V$ is a retract with retraction $z \mapsto \rho'(z,f(z))$.

\end{lemma}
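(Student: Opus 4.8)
The plan is to apply Lemma~\ref{cruciallem} to the final component $F:=\rho_{n+1}:\D^{n+1}\to\D$ of the retraction. First I would record the elementary fact that, since $\rho\circ\rho=\rho$ and $\rho(\D^{n+1})=V$, the map $\rho$ restricts to the identity on $V$; in particular $\rho_{n+1}(z,w)=w$ for every $(z,w)\in V$. It is convenient to dispose first of the degenerate case in which $V$ has only one $w$-value $c$: then $\rho_{n+1}(\D^{n+1})$, being the set of $w$-values of $\rho(\D^{n+1})=V$, equals $\{c\}$, so $\rho_{n+1}\equiv c$; taking $f\equiv c$, the verification of the remaining assertions goes through exactly as below.

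So assume $V$ has at least two $w$-values. Choose $(z_0,w_0),(z_1,w_1)\in V$ with $w_0\neq w_1$; since $\rho$ fixes $V$, these give $F(z_0,w_0)=w_0$ and $F(z_1,w_1)=w_1$. There is also a point with $F(z_2,w_2)\neq w_2$: otherwise $F=\rho_{n+1}$ would be the coordinate function $(z,w)\mapsto w$, which is an automorphism of $\D$ as a function of the single variable $w$, contradicting the hypothesis on $\rho_{n+1}$. Hence Lemma~\ref{cruciallem} applies and produces a unique holomorphic $f:\D^n\to\D$ satisfying $\rho_{n+1}(z,f(z))=f(z)$ on $\D^n$, whose graph moreover contains every point $(p,q)\in\D^{n+1}$ with $\rho_{n+1}(p,q)=q$. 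Since $\rho$ fixes $V$, every point of $V$ is of this form, so $V\subseteq\{(z,f(z)):z\in\D^n\}$; conversely, if $z\in\pi V$ then $(z,w)\in V$ for some $w$, which must equal $f(z)$, so $(z,f(z))\in V$. This gives \eqref{lemmaeq}.

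For the two idempotency statements I would use only two observations. First, for every $z\in\D^n$ we have $\rho(z,f(z))=(\rho'(z,f(z)),f(z))$ (using $\rho_{n+1}(z,f(z))=f(z)$), and this point lies in $\rho(\D^{n+1})=V$; hence $(z,w)\mapsto(\rho'(z,f(z)),f(z))$ maps $\D^{n+1}$ into $V$ and $z\mapsto\rho'(z,f(z))$ maps $\D^n$ into $\pi V$. Second, for $(z,w)\in V$ we have $w=f(z)$ and $\rho(z,f(z))=(z,f(z))$, so $\rho'(z,f(z))=z$; that is, $(z,w)\mapsto(\rho'(z,f(z)),f(z))$ fixes $V$ pointwise and $z\mapsto\rho'(z,f(z))$ fixes $\pi V$ pointwise. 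Combining the two observations yields surjectivity onto $V$ and onto $\pi V$ respectively, and then idempotency is immediate, since each map sends all of its domain into a set on which it is the identity. Thus $(z,w)\mapsto(\rho'(z,f(z)),f(z))$ is a retraction of $V$ and $z\mapsto\rho'(z,f(z))$ is a retraction of $\pi V$.

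The argument is short once Lemma~\ref{cruciallem} is in hand, so in a sense the hard work---which is where Theorem~\ref{mainthm} enters---has already been done. Within the present proof the only use of the hypothesis on $\rho_{n+1}$ is to rule out $F(z,w)\equiv w$, and the only point requiring slight extra care is the degenerate single-$w$-value case, in which Lemma~\ref{cruciallem} does not directly apply and one argues by hand with $f\equiv c$.
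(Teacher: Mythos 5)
Your proof is correct and follows essentially the same route as the paper: dispose of the constant/single-$w$-value case, use the fact that $\rho$ fixes $V$ to produce two fixed $w$-values and a non-fixed point so that Lemma~\ref{cruciallem} applies, and then verify the retraction claims from $\rho_{n+1}(z,f(z))=f(z)$ and $\rho|_V=\mathrm{id}$. Your "maps into $V$ and is the identity on $V$" packaging of the idempotency check is a slightly tidier presentation of the same computation the paper does explicitly.
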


\begin{proof}
If $\rho_{n+1}$ is constant, there is nothing to prove, so assume
otherwise.  Then, $\rho_{n+1}(z,w) = w$ for two distinct values of $w$
(and necessarily different values of $z$) since $\rho_{n+1}(\rho',
\rho_{n+1}) = \rho_{n+1}$.  Therefore, Theorem \ref{maincor} applies.
There exists $f:\D^n \to \D$ holomorphic satisfying
\begin{equation} \label{frho}
\rho_{n+1}(z,f(z)) = f(z)
\end{equation}
and 
\begin{equation} \label{frho2}
\{(z,w): \rho_{n+1}(z,w) = w \} = \{(z,f(z)): z \in \D^n\} \supset V.
\end{equation}
This proves \eqref{lemmaeq}.  

As $\rho(z,f(z)) \in V$ we see that by \eqref{frho2} and \eqref{frho},
$f(\rho'(z,f(z))) = \rho_{n+1}(z,f(z)) = f(z)$, which shows $(z,w)
\mapsto (\rho'(z,f(z)), f(z))$ agrees with the map $(z,w) \mapsto
\rho(z,f(z))$.  This is a retraction since its composition with itself
is
\[
\begin{aligned}
\rho(\rho'(z,f(z)), f(\rho'(z,f(z)))) & =
\rho(\rho'(z,f(z)),\rho_{n+1}(z,f(z))) \\
&= \rho(\rho(z,f(z))=
\rho(z,f(z))
\end{aligned}
\]
as desired.

We need to show that the range of $(z,w)\mapsto \rho(z,f(z))$ contains
$V$ (it certainly is contained in $V$).  If $(z,w) \in V$, then
$w=f(z)$ and $\rho(z,f(z)) = (z,f(z))= (z,w)$. So, this map is a
retraction \emph{of $V$}.

Finally, we must show $\pi V$ is a retract with retraction $z \mapsto
\rho'(z,f(z))$. This map is indeed a retraction since $(z,w) \mapsto
(\rho'(z,f(z)),f(z))$ is, and the first $n$ components necessarily
trace out $\pi V$.
\end{proof}

\begin{lemma} \label{autolem}
Let $\rho = (\rho_1,\dots, \rho_n): \D^n \to \D^n$ be a retraction of
$V$.  If $\rho_1(z_1,\dots, z_n)$ is an automorphism as a function of
$z_1$, then $\rho_1(z) \equiv z_1$.  If $\rho_1$ is an automorphism as
a function of $z_2$ then $\rho_2(z) \equiv z_2$ and $\rho_1(z) \equiv
\phi(z_2)$ for some $\phi$.
\end{lemma}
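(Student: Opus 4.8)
The plan is to use the first lemma of this section---the rigidity statement that if $f(z_0,w)=\phi(w)$ for all $w$ then $f(z,w)\equiv\phi(w)$---to show that whenever a component of $\rho$ is an automorphism as a function of a single variable, that component in fact depends on \emph{only} that variable, and then to feed this back into the idempotency relation $\rho\circ\rho=\rho$. The whole point is that an idempotent self-map composed with a bijection must be the identity.

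Consider the first claim. Suppose that for some choice of the remaining variables $z_2,\dots,z_n$ the map $z_1\mapsto\rho_1(z_1,z_2,\dots,z_n)$ is an automorphism $\phi$ of $\D$. Reindexing so that $z_1$ plays the role of the distinguished last variable $w$ and $(z_2,\dots,z_n)$ plays the role of $z$ in the first lemma of this section, that lemma gives $\rho_1(z)\equiv\phi(z_1)$. Now compare first components in $\rho\circ\rho=\rho$: since $\rho_1$ depends only on its first argument, $\rho_1(\rho(z))=\phi(\rho_1(z))=\phi(\phi(z_1))$, while $\rho_1(z)=\phi(z_1)$, so $\phi\circ\phi=\phi$ on $\D$. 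As $\phi$ is a bijection of $\D$, composing with $\phi^{-1}$ yields $\phi=\mathrm{id}$, i.e. $\rho_1(z)\equiv z_1$.

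The second claim is the same argument with one more bookkeeping step. If, for some fixed $z_1,z_3,\dots,z_n$, the map $z_2\mapsto\rho_1(z_1,z_2,z_3,\dots,z_n)$ is an automorphism $\psi$ of $\D$, then applying the first lemma of this section with $z_2$ in the role of $w$ and $(z_1,z_3,\dots,z_n)$ in the role of $z$ gives $\rho_1(z)\equiv\psi(z_2)$; this already establishes $\rho_1(z)\equiv\phi(z_2)$ with $\phi=\psi$. Since $\rho_1$ now depends only on its second argument, comparing first components in $\rho\circ\rho=\rho$ gives $\psi(\rho_2(z))=\rho_1(\rho(z))=\rho_1(z)=\psi(z_2)$ for all $z$, and injectivity of $\psi$ forces $\rho_2(z)\equiv z_2$.

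I do not expect a genuine obstacle here: the analytic content---collapsing a one-variable automorphism to a true dependence on a single coordinate---is exactly what the first lemma of Section \ref{retractsec} already provides, and what remains is elementary. The only things requiring a little care are the harmless coordinate permutation needed to invoke that lemma in its stated form, and the observation that the first component of $\rho\circ\rho$ involves only $\rho_1$ and $\rho_2$, so nothing need be known about the other components of $\rho$.
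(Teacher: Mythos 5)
Your proof is correct and follows essentially the same route as the paper's: the paper's (terser) argument also rests implicitly on the rigidity lemma to conclude that $\rho_1$ depends only on the distinguished variable, and then reads off $\phi\circ\phi=\phi$ (resp.\ $\phi(\rho_2(z))=\phi(z_2)$) from idempotency of $\rho$, exactly as you do. One small bookkeeping correction: the rigidity lemma you invoke is the first (unlabeled) lemma of the \emph{previous} section on the extension theorem, not of Section~\ref{retractsec}.
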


\begin{proof}
If $\rho_1$ is an automorphism, say $\phi$, as a function
of $z_1$, then $\phi \circ \phi = \phi$, which means $\phi =
\text{id}$.  This means $\rho_1(z) = z_1$. If $\rho_1$ is an
automorphism, say $\phi$, as a function of $z_2$, then
$\phi(\rho_2(z)) = \rho_1(\rho(z)) = \rho_1(z) = \phi(z_2)$.  This
implies $\rho_2(z) = z_2$.
\end{proof}

\begin{lemma} \label{formlem}
Let $\rho=(\rho_1,\dots, \rho_n):\D^n \to \D^n$ be a retraction of $V$
with all components equal to an automorphism as a function of a single
variable.  After conjugating by automorphisms of $\D^n$ we may put
$\rho$ into the form
\[
\rho(z,w) = (z, e(z))
\]
where $z \in \D^k$, $w \in \D^{n-k}$, $e: \D^k \to \D^{n-k}$, where
each component of $e$ is a coordinate function.
\end{lemma}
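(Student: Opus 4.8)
The plan is to use the rigidity in Lemma~\ref{autolem} to identify each component of $\rho$ up to a single Möbius factor, and then remove those factors by conjugating with a coordinatewise automorphism. First I would unwind the hypothesis: for each $i$ there is an index $\sigma(i)\in\{1,\dots,n\}$ and an automorphism $\phi_i$ of $\D$ with $\rho_i(z)=\phi_i(z_{\sigma(i)})$ (no component can be constant, since a constant is not an automorphism, and a component depending on a single coordinate is an automorphism in at most that coordinate, so $\sigma(i)$ is well defined). Put $I=\{i:\sigma(i)=i\}$ and $J=\{1,\dots,n\}\setminus I$. Lemma~\ref{autolem} then gives two things: if $i\in I$ it forces $\rho_i(z)\equiv z_i$, i.e.\ $\phi_i=\text{id}$; and if $i\in J$, say $\sigma(i)=j\ne i$, it forces $\rho_j(z)\equiv z_j$, hence $j\in I$. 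So $\sigma$ fixes $I$ pointwise and carries $J$ into $I$; in particular $\sigma\circ\sigma=\sigma$ and the image of $\sigma$ is exactly $I$.

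Next I would conjugate away the automorphisms $\phi_i$. Define $\psi=(\psi_1,\dots,\psi_n)$, an automorphism of $\D^n$, by $\psi_j=\text{id}$ for $j\in I$ and $\psi_i=\phi_i$ for $i\in J$; this is unambiguous because $I$ and $J$ partition the index set. A direct computation gives, for the conjugate $\tilde\rho=\psi^{-1}\circ\rho\circ\psi$,
\[
\tilde\rho_i(z)=\psi_i^{-1}\bigl(\phi_i(\psi_{\sigma(i)}(z_{\sigma(i)}))\bigr)=z_{\sigma(i)}\qquad(i=1,\dots,n),
\]
since for $i\in I$ both $\phi_i$ and $\psi_i$ are the identity, while for $i\in J$ we have $\sigma(i)\in I$ so $\psi_{\sigma(i)}=\text{id}$ and our choice $\psi_i=\phi_i$ cancels $\phi_i$. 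Conjugating a retraction of $V$ by $\psi$ yields a retraction of $\psi^{-1}(V)$, so replacing $\rho$ by $\tilde\rho$ is legitimate for the purposes of the statement. Now each component of $\rho$ is literally a coordinate function $z\mapsto z_{\sigma(i)}$, with $\sigma(i)=i$ for $i\in I$ and $\sigma(i)\in I$ for $i\in J$.

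Finally I would apply a coordinate permutation — also an automorphism of $\D^n$ — to move the ``free'' indices $I$ to $\{1,\dots,k\}$, where $k=|I|$, so that $J=\{k+1,\dots,n\}$. Writing the first $k$ coordinates as $z\in\D^k$ and the last $n-k$ as $w\in\D^{n-k}$, the identities $\rho_i(z,w)=z_i$ for $i\le k$ and $\rho_{k+\ell}(z,w)=z_{\sigma(k+\ell)}$ with $\sigma(k+\ell)\le k$ assemble into exactly $\rho(z,w)=(z,e(z))$, where $e=(e_1,\dots,e_{n-k})$ and $e_\ell(z)=z_{\sigma(k+\ell)}$ is a coordinate function. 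I do not expect a real obstacle: essentially all the content is Lemma~\ref{autolem}, and the only point requiring care is that the definition of $\psi$ in the second step be consistent, which is exactly what the conclusions $\sigma|_I=\text{id}$ and $\sigma(J)\subseteq I$ guarantee.
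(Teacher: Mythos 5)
Your proof is correct and follows essentially the same route as the paper: use Lemma~\ref{autolem} to pin down each component as either $z_i$ or $z_j$ composed with a Möbius map whose target index is itself fixed, strip the Möbius factors by conjugating with a coordinatewise automorphism, and then permute coordinates. The paper only illustrates this with a two-coordinate example and waves at the general case, so your explicit bookkeeping with $\sigma$, $I$, $J$, and $\psi$ is a complete version of the same argument.
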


\begin{proof} 
Let us just illustrate.  If $\rho_2(z) = z_2$ and $\rho_1(z) =
\phi(z_2)$ for some one variable automorphism $\phi$, we can conjugate
by the automorphism of $\D^n$ given by $\psi(z) = (\phi^{-1}(z_1),
z_2, \dots, z_n)$ to get
\[
\psi \circ \rho \circ \psi^{-1}(z) = (z_2,z_2,\rho_3\circ \psi^{-1}(z),\dots, \rho_n\circ \psi^{-1}(z))
\]
The lemma then follows from the previous lemma after reordering and
conjugating by analogous automorphisms as necessary.
\end{proof}

\begin{proof}[Proof of Theorem \ref{refineretract}]
There is no harm in assuming $V$ is not a Cartesian product of a point
and a retract (this is equivalent to assuming our retractions do not
possess a constant component).  

We proceed by induction. Let $n=1$ and let $\rho: \D \to V$ be a
retraction.  One variable retractions are either constant (which by
assumption is ruled out) or equal to the identity (by the Schwarz-Pick
lemma, a self-map of the disk with two fixed points equals the
identity).  

Suppose the theorem holds for $n$ dimensional retracts.  Let $\rho:
\D^{n+1} \to V$ be a retraction onto $V$.  
If all components of $\rho$ are automorphisms (in a single variable)
then we are finished by Lemma \ref{formlem}.
So, we assume some component is not an automorphism in a single
variable and relabel to make $\rho_{n+1}$ such a component.  By Lemma
\ref{keylemma}, we can replace $\rho$ with a retraction $r$ of the
form
\[
r(z,w) = (r'(z), f(z))
\]
where $z \in \D^n$ and $w \in \D$ and the projection of $V$ onto the
first $n$ coordinates, denoted $\pi V$, is a retract with retraction
$r'(z)$.  By induction (after possibly applying automorphisms of
$\D^n$) we may put $\pi V$ into the form
\[
\pi V = \{ (z,e(z), g(z)): z \in \D^k\}
\]
where $e: \D^k \to \D^{m}$ consists of coordinate functions, $g: \D^k
\to \D^{n-m-k}$ is holomorphic with no components equal to an
automorphism.  

Then, 
\[
V = \{ (z,e(z), g(z), f(z,e(z),g(z))): z \in \D^k\}
\]
which is of the desired form.
\end{proof}

\bibliography{aradaga}

\end{document}